\newtheorem{thm}[subsection]{Theorem}
\newtheorem{lem}[subsection]{Lemma}
\newtheorem{rem}[subsection]{Remark}
\theoremstyle{definition}
\newtheorem{Def}[subsection]{Definition}
\newtheorem{proposition-definition}[subsection]{Proposition-Definition}
\newcommand{\Oh}{\mathcal{O}}
\begin{document}

\title {Semistability of certain bundles on second symmetric power of a curve}

\author[K. Dan]{Krishanu Dan}
\address{Institute of Mathematical Sciences, C.I.T Campus, Tharamani, Chennai-600113, India}
\email{bubukrish@gmail.com}

\author[S.Pal]{Sarbeswar Pal}

\address{Indian Statistical Institute, 8th Mile, Mysore Road,
Bangalore- 560059, India}

\email{sarbeswar11@gmail.com}

\keywords{vector bundles,  symmetric power, semistability, 
}
\date{}

\begin{abstract}
Let $C$ be a smooth irreducible projective curve and $E$ be a stable bundle of rank $2$ on $C$. Then one can associate a rank $4$ vector bundle
$\mathcal{F}_2(E)$ on $S^2(C)$, the second symmetric power of $C$. Our goal in this article is to study semistability of this bundle. 

\end{abstract}
\maketitle

\section{Introduction}
It has been an interesting and important object to study vector bundles over smooth projective varieties. The moduli space of 
semistable vector bundles with fixed topological invariants is well understood for the case of curves. However 
the question of existence of such bundles is  open  for higher dimensional varieties. In this article we will study the semistability
of certain vector bundles on second symmetric power of a smooth projective curve, which arises naturally.

Let $C$ be smooth irreducible projective curve over the fields $\mathbb{C}$ of complex numbers and $E$ be a rank $r$ vector bundle on $C$.
There is a naturally associated vector bundle $\mathcal{F}_2(E)$ of rank $2r$ on the second symmetric power $S^2(C)$ which is defined 
in Section \ref{s1}. The stability and semi-stability for case $r=1,$ i.e. when $E$ is  a line bundle on $C,$ has been studied and well understood 
(\cite {BN2}, \cite{LMN}). In this article we consider the case when rank $E$ is two.

Fixing a point $x \in C$, the image of $\{x\} \times C$ in $S^2(C)$ defines an ample divisor $H'$ on $S^2(C)$, which we denote by $x+C$.
We prove the following:
\begin{thm}
 Let $E$ be a rank two stable vector bundle of even degree $d \geq 2$ on $C$ such that $\mathcal{F}_2(E)$ is globally generated. 
 Then the bundle $\mathcal{F}_2(E)$ on $S^2(C)$ is $\mu_{H'}$-semistable with respect to the ample class $H'=x+C$.
\end{thm}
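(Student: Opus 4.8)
I would pull the question back to the product surface $C\times C$. Let $\eta\colon C\times C\to S^2(C)$ be the quotient by the involution $(a,b)\mapsto(b,a)$; it is finite and flat of degree $2$. Under the standard identification of the universal divisor over $S^2(C)$ with $C\times C$ (via $(z,w)\mapsto(z,z+w)$) one has $\mathcal{F}_2(E)=\eta_{*}p_1^{*}E$, where $p_1,p_2$ are the projections of $C\times C$; hence $\eta^{*}\mathcal{F}_2(E)=\eta^{*}\eta_{*}p_1^{*}E$ sits in the ramification sequence
\[
0\longrightarrow p_2^{*}E(-\Delta)\longrightarrow \eta^{*}\mathcal{F}_2(E)\longrightarrow p_1^{*}E\longrightarrow 0,
\]
with $\Delta\subset C\times C$ the diagonal. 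Writing $F_i$ for the class of a fibre of $p_i$, we have $\eta^{*}H'=F_1+F_2$ (ample), $F_i^{2}=0$, $F_1\!\cdot\!F_2=1$, $\Delta\!\cdot\!F_i=1$, so $\mu_{F_1+F_2}(p_2^{*}E(-\Delta))=\tfrac{d-4}{2}$, $\mu_{F_1+F_2}(p_1^{*}E)=\tfrac d2$ and $\mu_{F_1+F_2}(\eta^{*}\mathcal{F}_2(E))=\tfrac{d-2}{2}$. Since $\eta^{*}$ is exact and multiplies intersection numbers by $2$, a $\mu_{H'}$-destabilizing subsheaf of $\mathcal{F}_2(E)$ pulls back to a $\mu_{F_1+F_2}$-destabilizing one of $\eta^{*}\mathcal{F}_2(E)$, so it suffices to prove the latter is $\mu_{F_1+F_2}$-semistable. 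I would also record two facts: $\eta^{*}\mathcal{F}_2(E)$ is globally generated, being a pullback of a globally generated sheaf; and $p_1^{*}E$, $p_2^{*}E(-\Delta)$ are $\mu_{F_1+F_2}$-semistable — restrict a rank-one subsheaf to a general fibre of $p_1$ and of $p_2$, where it embeds into $\mathcal{O}_C^{2}$ on one and into a stable bundle of even degree on the other, and the even parity of $d$ closes the slope estimate.

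Now suppose $\mathcal{S}\subset\eta^{*}\mathcal{F}_2(E)$ is saturated with $\mu_{F_1+F_2}(\mathcal{S})>\tfrac{d-2}{2}$. Put $\mathcal{K}=\mathcal{S}\cap p_2^{*}E(-\Delta)$, let $\overline{\mathcal{S}}$ be the image of $\mathcal{S}$ in $p_1^{*}E$, and set $a=\operatorname{rk}\mathcal{K}$, $b=\operatorname{rk}\overline{\mathcal{S}}$, so $0<a+b<4$. From $\deg\mathcal{S}=\deg\mathcal{K}+\deg\overline{\mathcal{S}}$ and the semistability of $p_2^{*}E(-\Delta)$ and $p_1^{*}E$,
\[
\mu_{F_1+F_2}(\mathcal{S})\ \le\ \frac{a\cdot\tfrac{d-4}{2}+b\cdot\tfrac d2}{a+b}\ =\ \frac{d-2}{2}+\frac{b-a}{a+b},
\]
which already contradicts the assumption unless $b>a$, i.e.\ $(a,b)\in\{(0,1),(0,2),(1,2)\}$.

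In the cases $(0,1)$ and $(1,2)$ I would sharpen the bound by restricting $\mathcal{K}$ and $\overline{\mathcal{S}}$ directly to general fibres. If $(a,b)=(0,1)$ then $\mathcal{S}$ is a rank-one subsheaf of $p_1^{*}E$; on a general $p_1$-fibre it embeds into $\mathcal{O}_C^{2}$ (so $c_1(\mathcal{S})\cdot F_1\le 0$) and on a general $p_2$-fibre into $E$ (so $c_1(\mathcal{S})\cdot F_2\le\tfrac d2-1$ by stability of $E$ and $d$ even), whence $\mu_{F_1+F_2}(\mathcal{S})\le\tfrac{d-2}{2}$. If $(a,b)=(1,2)$ then $\mathcal{S}$ has rank $3$, $\overline{\mathcal{S}}$ has full rank in $p_1^{*}E$ so $\deg\overline{\mathcal{S}}\le d$, while the rank-one $\mathcal{K}$ restricts into $E(-\mathrm{pt})$ on a general $p_1$-fibre (so $c_1(\mathcal{K})\cdot F_1\le\tfrac{d-4}{2}$) and into $\mathcal{O}_C(-\mathrm{pt})^{2}$ on a general $p_2$-fibre (so $c_1(\mathcal{K})\cdot F_2\le -1$), giving $\deg\mathcal{K}\le\tfrac{d-6}{2}$ and $\mu_{F_1+F_2}(\mathcal{S})\le\tfrac13\bigl(\tfrac{d-6}{2}+d\bigr)=\tfrac{d-2}{2}$. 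Both contradict the assumption.

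The case $(a,b)=(0,2)$ is the crux, and the only place global generation is genuinely used. Here $\mathcal{S}$ has rank $2$, $\mathcal{S}\cap p_2^{*}E(-\Delta)=0$, and $\overline{\mathcal{S}}$ has full rank in $p_1^{*}E$; let $\mathcal{Q}=\eta^{*}\mathcal{F}_2(E)/\mathcal{S}$, of rank $2$. Since $\mathcal{S}\cap p_2^{*}E(-\Delta)=0$, the composite $p_2^{*}E(-\Delta)\hookrightarrow\eta^{*}\mathcal{F}_2(E)\twoheadrightarrow\mathcal{Q}$ is injective with torsion cokernel $p_1^{*}E/\overline{\mathcal{S}}$; hence $c_1(\mathcal{Q})-c_1(p_2^{*}E(-\Delta))$ is an effective class and $c_1(\mathcal{Q})\cdot F_1\ge c_1(p_2^{*}E(-\Delta))\cdot F_1=d-2$. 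On the other hand $\mathcal{Q}$ is globally generated (a quotient of $\eta^{*}\mathcal{F}_2(E)$), so $\det\mathcal{Q}$ is effective — the wedge of two general sections — and $c_1(\mathcal{Q})\cdot F_2\ge 0$. Therefore $c_1(\mathcal{Q})\cdot(F_1+F_2)\ge d-2$, i.e.\ $\mu_{F_1+F_2}(\mathcal{Q})\ge\tfrac{d-2}{2}$; but $\operatorname{rk}\mathcal{Q}=\operatorname{rk}\mathcal{S}=2$ and $\mu_{F_1+F_2}(\mathcal{S})>\tfrac{d-2}{2}$ force $\mu_{F_1+F_2}(\mathcal{Q})<\tfrac{d-2}{2}$, a contradiction. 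With all cases excluded, $\eta^{*}\mathcal{F}_2(E)$, hence $\mathcal{F}_2(E)$, is $\mu_{H'}$-semistable. I expect the main effort to lie in (i) the identification $\mathcal{F}_2(E)=\eta_{*}p_1^{*}E$ together with the precise $\Delta$-twist in the ramification sequence, and (ii) the behaviour of the sheaves $p_i^{*}E(-\Delta)$ and of $\mathcal{S},\mathcal{K},\overline{\mathcal{S}},\mathcal{Q}$ under restriction to general fibres; the one substantive, non-formal input is the global generation, used in case $(0,2)$ exactly to bridge the gap between the soft slope $\tfrac{d-4}{2}$ coming from $p_2^{*}E(-\Delta)$ and the required $\tfrac{d-2}{2}$.
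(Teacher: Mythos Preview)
Your argument is correct and follows the same overall route as the paper: pull back along the degree-$2$ quotient $C\times C\to S^2(C)$, use the ramification short exact sequence with sub $p_j^*E(-\Delta)$ and quotient $p_i^*E$, and run a case analysis on the ranks $(a,b)$ of the intersection and the image of a putative destabilizing subsheaf. The paper organizes the casework by $\operatorname{rk}\mathcal{S}\in\{1,2,3\}$ rather than via your weighted-average inequality, but these are cosmetic rearrangements of the same computation; your cases $(0,1)$ and $(1,2)$ match the paper's rank-$1$ and rank-$3$ arguments essentially verbatim.

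The one substantive difference is in the critical case $(a,b)=(0,2)$. The paper handles it by analyzing the torsion cokernel of $\mathcal{S}\hookrightarrow p_i^*E$: if the support is $1$-dimensional it pins down the possible degree-$1$ effective divisors on $C\times C$ and checks that each makes $c_1$ of the quotient negative on some vertical fibre, contradicting global generation; if the support is $0$-dimensional it obtains a splitting $\eta^*\mathcal{F}_2(E)\cong p_j^*E(-\Delta)\oplus p_i^*E$ and derives the same contradiction. Your treatment is cleaner: you pass directly to the rank-$2$ quotient $\mathcal{Q}$, observe that $c_1(\mathcal{Q})-c_1(p_j^*E(-\Delta))$ is effective (hence its intersection with the nef fibre class is $\ge 0$) and that global generation of $\mathcal{Q}$ forces $c_1(\mathcal{Q})\cdot F_2\ge 0$, which immediately gives $\mu(\mathcal{Q})\ge\tfrac{d-2}{2}$. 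This avoids the division into codimension-$1$ versus codimension-$2$ support and the explicit classification of degree-$1$ effective divisors. One small wording remark: since $\mathcal{Q}$ need not be locally free, ``the wedge of two general sections'' is better phrased as restricting $\mathcal{Q}$ to a general fibre $F_2$ (avoiding the finitely many non-locally-free points), where it becomes a globally generated rank-$2$ bundle on a curve and hence has non-negative degree; the conclusion is unchanged.
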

 
\begin{thm}
 Assume the genus of $C$ greater than 2. Let $E$ be a rank two $(0, 1)$-stable bundle (defined in Section \ref{s3}) of odd
 degree $d \geq 1$ on $C$ such that $\mathcal{F}_2(E)$ is globally generated.
  Then the bundle $\mathcal{F}_2(E)$ on $S^2(C)$ is $\mu_{H'}$-semistable with respect to the ample class $H'=x+C$.
\end{thm}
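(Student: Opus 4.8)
The plan is to pull everything back to the product $C\times C$. Write $q\colon C\times C\to S^2(C)$ for the degree-two quotient by the involution $\sigma$ that swaps the two factors, $p_i\colon C\times C\to C$ for the projections, $F_i=p_i^{*}(\mathrm{pt})$, and $\iota\colon\Delta\cong C\hookrightarrow C\times C$ for the diagonal; then $G:=q^{*}H'=F_1+F_2$ is ample. From the construction of $\mathcal F_2(E)$ in Section~\ref{s1} --- concretely $\mathcal F_2(E)\cong q_{*}p_1^{*}E$ --- together with the counit $q^{*}q_{*}\to\mathrm{id}$, its $\sigma$-conjugate, and the fact that $q$ is ramified exactly along $\Delta$, one obtains an exact sequence
\[
0\longrightarrow q^{*}\mathcal F_2(E)\longrightarrow p_1^{*}E\oplus p_2^{*}E\longrightarrow \iota_{*}E\longrightarrow 0 .
\]
In particular $q^{*}\mathcal F_2(E)$ is a subsheaf of $p_1^{*}E\oplus p_2^{*}E$, and from $F_i^{2}=0$, $F_1F_2=1$, $\Delta\cdot F_i=1$ one computes $\mu_{G}(p_1^{*}E\oplus p_2^{*}E)=d/2$ while $\mu_{G}(q^{*}\mathcal F_2(E))=(d-2)/2$, which is $2\mu_{H'}(\mathcal F_2(E))$.

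Suppose $\mathcal F_2(E)$ were not $\mu_{H'}$-semistable and choose $\mathcal S\subsetneq\mathcal F_2(E)$ with $\mu_{H'}(\mathcal S)>\mu_{H'}(\mathcal F_2(E))$; then $\operatorname{rk}\mathcal S=\rho\in\{1,2,3\}$ (a rank-four subsheaf cannot destabilize). Pulling back, $q^{*}\mathcal S$ is $\sigma$-invariant, torsion-free, contained in $p_1^{*}E\oplus p_2^{*}E$, with $\mu_{G}(q^{*}\mathcal S)=2\mu_{H'}(\mathcal S)>(d-2)/2$. Let $\mathcal T$ be the saturation of $q^{*}\mathcal S$ in $p_1^{*}E\oplus p_2^{*}E$: it is a $\sigma$-invariant sub-bundle of rank $\rho\le3$ with $\mu_{G}(\mathcal T)\ge\mu_{G}(q^{*}\mathcal S)>(d-2)/2$.

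Next I would analyse $p_1^{*}E\oplus p_2^{*}E$ with respect to $G$. Each $p_i^{*}E$ is $\mu_{G}$-stable (a destabilizing subsheaf would contradict the stability of $E$ on a general fibre of the other projection, on which $p_i^{*}E\cong E$, the complementary fibres contributing non-positively), and $\operatorname{Hom}(p_1^{*}E,p_2^{*}E)=H^{0}(E^{\vee})\otimes H^{0}(E)=0$. A short argument with these two facts shows that the only saturated subsheaves of $p_1^{*}E\oplus p_2^{*}E$ of slope exactly $d/2$ are $p_1^{*}E$, $p_2^{*}E$, and the whole bundle --- none of which is a proper $\sigma$-invariant sub-bundle of rank $\le3$. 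Hence $\mu_{G}(\mathcal T)<d/2$, so $\mu_{G}(\mathcal T)$ lies in the open interval $\bigl((d-2)/2,\,d/2\bigr)$.

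It remains to contradict this pinched interval. Set $j=\operatorname{rk}(\mathcal T\cap p_1^{*}E)$; by $\sigma$-symmetry $\operatorname{rk}(\mathcal T\cap p_2^{*}E)=j$ as well, and since $(\mathcal T\cap p_1^{*}E)\oplus(\mathcal T\cap p_2^{*}E)$ injects into $\mathcal T$ we get $2j\le\rho\le3$, so $j\in\{0,1\}$. If $j=\rho=1$ then $\mathcal T\subseteq p_2^{*}E$, whence by $\sigma$-invariance $\mathcal T\subseteq p_1^{*}E\cap p_2^{*}E=0$, absurd. Otherwise restrict $\mathcal T$ to a general fibre $F\cong C$ of $p_1$: there $p_1^{*}E\oplus p_2^{*}E$ becomes $(E_c\otimes\mathcal O_C)\oplus E$, the intersection $\mathcal T|_F\cap E$ has rank $j$, and the image of $\mathcal T|_F$ in the trivial summand has non-positive degree, so $\deg(\mathcal T|_F)\le0$ if $j=0$ and $\deg(\mathcal T|_F)\le\lfloor(d-2)/2\rfloor$ if $j=1$ (the latter being the bound on line subsheaves of a $(0,1)$-stable $E$ of odd degree $d$; for even $d$ and merely stable $E$ one has the same bound $(d-2)/2$). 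On the other hand $\sigma$-invariance gives $\deg(\mathcal T|_F)=\tfrac12 c_1(\mathcal T)\cdot G>\tfrac{\rho(d-2)}{4}$. Comparing, $j=0$ forces $d<2$, and $j=1$ (so $\rho\ge2$) forces $\rho(d-2)<2(d-2)$; both are impossible for $d\ge2$. The lone remaining value $d=1$ is vacuous: a $(0,1)$-stable $E$ of degree $1$ on a curve of genus $>2$ has $h^{0}(E)\le2$ by Clifford's inequality, so the rank-four bundle $\mathcal F_2(E)$ cannot be globally generated. This contradiction establishes the theorem.

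The steps I expect to cost real work are the exact sequence of the first paragraph --- verifying that $q^{*}\mathcal F_2(E)$ really embeds into the polystable bundle $p_1^{*}E\oplus p_2^{*}E$ with the stated slopes --- and the boundary step showing $\mu_{G}(\mathcal T)\neq d/2$; the ensuing case analysis is elementary, and it is exactly there, in the bound $\lfloor(d-2)/2\rfloor$ on line subsheaves of $E$, that ``$(0,1)$-stable of odd degree'' does the work that ``stable of even degree'' does in the first theorem --- were $E$ merely stable of odd degree the bound would be one larger and the case $j=1$, $\rho=2$ would survive.
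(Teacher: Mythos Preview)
Your argument is correct and takes a genuinely different route from the paper's. The paper works with the subsidiary exact sequence $0\to p_1^{*}E(-\Delta)\to \pi^{*}\mathcal F_2(E)\to p_2^{*}E\to 0$ and, for each rank $\rho\in\{1,2,3\}$, bounds a potential destabiliser $F\subset\pi^{*}\mathcal F_2(E)$ via its pieces $F'=F\cap p_1^{*}E(-\Delta)$ and $F''=\operatorname{im}(F\to p_2^{*}E)$; global generation is invoked in the rank-two analysis (carried over verbatim from the even-degree theorem) to rule out both the possibility that $F\cong p_2^{*}E$ splits the sequence and the case where the cokernel of $F\hookrightarrow p_2^{*}E$ is supported on a degree-one divisor. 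By contrast you saturate $q^{*}\mathcal S$ directly inside the polystable bundle $p_1^{*}E\oplus p_2^{*}E$ and exploit the $\sigma$-invariance of that saturation: this halves the bookkeeping (the identity $\deg(\mathcal T|_F)=\tfrac12\,c_1(\mathcal T)\cdot G$) and lets you treat all ranks uniformly through the single invariant $j=\operatorname{rk}(\mathcal T\cap p_i^{*}E)$. A pleasant by-product is that your proof uses global generation only to dispose of $d=1$; for $d\ge3$ odd it establishes semistability from $(0,1)$-stability alone, which is slightly sharper than what the paper's argument yields.

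Two small slips do not affect the outcome. The case ``$j=\rho=1$'' you single out is already excluded by $2j\le\rho$, so that sentence is superfluous. And in the $j=1$ branch, for odd $d$ the bound $\lfloor(d-2)/2\rfloor=(d-3)/2$ gives $\rho(d-2)<2(d-3)$ rather than $\rho(d-2)<2(d-2)$; either inequality eliminates $\rho\in\{2,3\}$. Finally, the appeal to Clifford for $h^{0}(E)\le2$ when $d=1$ is more than you need: a nonzero section would saturate to a line subbundle of non-negative degree, while $(0,1)$-stability of a degree-one bundle forces every line subbundle to have degree $\le -1$, so in fact $h^{0}(E)=0$.
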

\section{Preliminaries}\label{s1}

Let $C$ be a smooth irreducible projective curve over the field of complex numbers $\mathbb{C}$ of genus $g$. On the space $C \times C$, consider the following involution
$C \times C \longrightarrow C \times C, (x, y) \mapsto (y, x)$. The resulting quotient space is denoted by $S^2(C)$, called the second symmetric power of $C$. It is a smooth irreducible projctive 
surface over $\mathbb{C}$.  Note that, $S^2(C)$ is 
naturally identified with the set of all degree $2$ effective divisors of $C$. Set
\[
 \Delta_2:= \{(D, p) \in S^2(C) \times C \rvert D= p + q, \text{for some } q \in C\}.
\]
Then $\Delta_2$ is a divisor in $S^2(C) \times C,$ called the universal divisor of degree $2.$ Let $q_1$ and $q_2$ be the projections from $S^2(C)\times C$ onto the first and second factors 
respectively. Then the restriction of the first projection to $\Delta_2$ induces a  morphism
\[
 q: \Delta_2 \longrightarrow S^2(C),
\]
which is a two sheeted ramified covering.
For any vector bundle $E$ of rank $r$ on $C$ we constract a bundle $\mathcal{F}_2(E):= (q)_*(q_2^*(E)\mid_{\Delta_2})$ of rank $2r$ over $S^2(C).$ From the exact sequence 
\[
 0 \to \mathcal{O}_{S^2(C) \times C}(-\Delta_2) \to \Oh_{S^2(C) \times C} \to \Oh_{\Delta_2} \to 0
\]
on $S^2(C) \times C$ we get the following exact sequence on $S^2(C)$
\[
 0 \to q_{1*}(q_2^*E \otimes \Oh_{S^2(C) \times C}(-\Delta_2)) \to q_{1*}q_2^*E \to \mathcal{F}_2(E).
\]

Define $f: C\times C \to \Delta_2$ by $(x,y) \mapsto (x+y, x).$ Then $f$ is an identification. Let $p_i:C\times C \to C$ be the $i$-th coordinate projection and let $\pi: C\times C \to S^2(C)$ be 
the quotient map. Then it's easy to check that $\pi=q\circ f$ and $\mathcal{F}_2(E) = \pi_*p_2^*E.$

\begin{rem}
 Let $C$ be a smooth irreducible projective curve over $\mathbb{C}$ of genus $g$ and let $M$ be a line bundle on $C$ of degree $d$. Consider the rank two vector bundle $V(M) := \pi_* p_2^*M$ 
 on $S^2(C)$. Using Grothendieck-Riemann-Roch, one can compute the Chern classes of $V(M)$:
 $$
 c_1(V(M)) = (d - g - 1)x + \theta
 $$
 and
 $$
 c_2(V(M)) = {{d - g} \choose 2}x^2 + (d - g)x.\theta + \frac{\theta^2}{2}
 $$
 where $x$ is the image of the cohomology class of $x + C$ in $S^2(C)$, $\theta$ is the cohomology class of the pull back of the theta divisor in Pic$^2(C)$ under the natural map of $S^2(C)$ 
 to Pic$^2(C)$ \cite[Lemma\, 2.5, Chapter\, VIII ]{ACGH}. Note that the cohomology group $H^4(S^2(C), \mathbb{Z})$ is naturally isomorphic to $\mathbb{Z}$, and $x^2 = 1, x.\theta = g, 
 \theta^2 = g(g - 1)$.
 
 To find the Chern character of $\mathcal{F}_2(E),$ for any rank $r$ vector bundle $E,$ first choose a filtration of $E$ such that the successive quotients are line bundles and use the 
 fact that $\mathcal{F}_2(\oplus M_k) = \oplus \mathcal{F}_2(M_k)$ where $M_k$'s are line bundles over $C.$ Then the Chern character of $\mathcal{F}_2(E)$ has the follwing expression \cite{BL}: 
\[
 ch(\mathcal{F}_2(E)) = \text{degree}(E)(1- \text{exp}(-x)) -r(g-1) + r(1 +g + \theta)\text{exp}(-x).
\]
From the above expression one can easily see that 
 $ c_1(\mathcal{F}_2(E)) = (d -r(g+1))x + r \theta,$ where $d=$ degree $E.$
\end{rem}

\section{Semistability of $\mathcal{F}_2(E)$, for degree E even}

Let $C$ be a smooth irreducible projective curve over the field of complex numbers $\mathbb C$ of genus $g$ and let $E$ be a rank $r$ vector bundle on $C.$
In this section we will prove the semistability of the vector bundle $\mathcal{F}_2(E)$, when $r =2$ and degree $E$ is even. We start with the following definitions.
\begin{Def}
 Let $C$ be a non-singular irreducible curve. For a vector bundle $F$ on $C$ we define $$\mu(F) := \frac{\text{degree}(F)}{\text{rank}(F)}.$$
 A vector bundle $F$ on $C$ is said to be semistable (respectively, stable) if for every subbundle $F'$ of $F$ we have
 $$\mu(F') \leq \mu(F) (\text{respectively,} \mu(F') < \mu(F)).$$
\end{Def}

\begin{Def}
 Let $X$ be a smooth irreducible surface and let $H$ be an ample divisor on $X.$ For a coherent torsion free sheaf $F$ on $X,$ we set 
 $$\mu_H(F):= \frac{\text{degree}_H(F)}{\text{rank}(F)}$$ where $\text{degree}_H(F) = c_1(F)\cdot H.$
 
 A vector bundle $F$ on $X$ is said to be $\mu_H$-semistable (respectively, $\mu_H$-stable), if for every coherent torsion free subsheaf $F'$ of $F$ with 
 $0 < \text{rank}(F) < \text{rank} (E),$ we have 
 $$\mu_H(F') \leq \mu_H(F) (\text{respectively,} \mu_H(F') < \mu_H(F)).$$
\end{Def}

\begin{thm}\label{T}
 Let $E$ be a rank two stable vector bundle of even degree $d \geq 2$ on $C$ such that $\mathcal{F}_2(E)$ is globally generated. 
 Then the bundle $\mathcal{F}_2(E)$ on $S^2(C)$ is $\mu_{H'}$-semistable with respect to the ample class $H' = x + C$.
\end{thm}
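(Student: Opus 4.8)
The plan is to argue by contradiction: suppose $\mathcal{F}_2(E)$ is not $\mu_{H'}$-semistable, and let $S \subset \mathcal{F}_2(E)$ be the maximal destabilizing subsheaf, which we may take to be saturated and reflexive (hence a subbundle outside a finite set). Since $\mathrm{rank}(\mathcal{F}_2(E)) = 4$, the rank of $S$ is $1$, $2$, or $3$; because the quotient $\mathcal{F}_2(E)/S$ of a destabilizing quotient is again destabilized, it suffices to rule out $\mathrm{rank}(S) \in \{1, 2\}$ (rank $3$ being handled by passing to the quotient of rank $1$). First I would record the numerical data: from the Chern character formula in the Remark, $c_1(\mathcal{F}_2(E)) = (d - 2(g+1))x + 2\theta$, so $c_1(\mathcal{F}_2(E)) \cdot H' = (d - 2(g+1))\,x^2 + 2\,\theta\cdot x = d - 2(g+1) + 2g = d - 2$, giving $\mu_{H'}(\mathcal{F}_2(E)) = (d-2)/4$. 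A destabilizing $S$ would then satisfy $c_1(S)\cdot H' \geq \lceil \mathrm{rank}(S)(d-2)/4 \rceil$ in the appropriate sense, and the goal is to contradict this.

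The key geometric input is the description $\mathcal{F}_2(E) = \pi_* p_2^* E$ together with the ramified double cover structure. I would pull back a putative destabilizing subsheaf along $\pi : C \times C \to S^2(C)$ (or restrict to the curve $x + C \cong C$, which represents $H'$) to convert a statement about sheaves on the surface into one about the bundle $E$ on the curve, where stability of $E$ is available. Concretely, restricting $\mathcal{F}_2(E)$ to the divisor $x + C$: using $\pi = q \circ f$ and the identification $f(C \times C) = \Delta_2$, one computes $\mathcal{F}_2(E)|_{x+C}$ as an extension involving $E$ and $E \otimes (\text{line bundle})$ — the exact sequence $0 \to \mathcal{O}(-\Delta_2) \to \mathcal{O} \to \mathcal{O}_{\Delta_2} \to 0$ from the Preliminaries, restricted suitably, should yield $0 \to E \to \mathcal{F}_2(E)|_{x+C} \to E(\text{pt}) \to 0$ or similar. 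Then a destabilizing subsheaf $S$ restricts to a subsheaf of this bundle on $C$ whose slope is controlled by $\mu_{H'}(S)$; since $E$ is stable, the subsheaves of $E$ and of $E(\text{pt})$ have bounded slope, and combining these bounds with the numerics above should force $\mu_{H'}(S) \leq (d-2)/4$. Global generation of $\mathcal{F}_2(E)$ enters to ensure there are no sections vanishing on $x + C$, i.e. that the restriction map on sections is well-behaved and that certain $H^0$ groups do not jump, which is what lets one conclude a rank-$1$ subsheaf cannot have too many sections / too large degree.

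I would organize the rank-$1$ case and the rank-$2$ case separately. For $\mathrm{rank}(S) = 1$: a destabilizing line subsheaf $L \hookrightarrow \mathcal{F}_2(E)$ would need $L \cdot H' \geq \lceil (d-2)/4 \rceil$; pushing/pulling to $C$ and using that $\mathrm{Hom}(L|_{x+C}, E) $ and $\mathrm{Hom}(L|_{x+C}, E(\mathrm{pt}))$ are forced to vanish for slope reasons (stability of $E$, $(0,1)$-type arguments) yields a contradiction. For $\mathrm{rank}(S) = 2$: here $S$ and $\mathcal{F}_2(E)/S$ are both rank $2$, and one plays them off against each other — the restriction to $x+C$ gives rank-$2$ sub/quotient bundles of an extension of $E$ by $E(\mathrm{pt})$, and stability of $E$ (together with $\mu(E) = d/2$ vs. the required slope) pins down the possibilities; the only candidates are essentially $p_2^*E$-type pieces or pullbacks from $\mathrm{Pic}$, whose slopes can be computed directly via the Chern class formulas and shown not to destabilize. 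The main obstacle I expect is the rank-$2$ case: ruling out a rank-$2$ destabilizing subsheaf requires genuinely understanding which rank-$2$ subbundles of $\mathcal{F}_2(E)$ arise geometrically — one cannot simply restrict to a curve and quote stability of $E$, because a rank-$2$ subsheaf of $\mathcal{F}_2(E)$ need not come from a subsheaf of $E$. I anticipate needing a more careful argument there, perhaps using the Harder–Narasimhan filtration of the restriction of $S$ to a general curve in $|H'|$ combined with a boundedness/specialization argument, or an explicit analysis of $\mathrm{Hom}(\pi^* S, p_2^* E)$ on $C \times C$ exploiting that $E$ is stable and $p_2^* E$ is semistable with respect to appropriate polarizations on $C \times C$. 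Controlling the discrepancy between $c_1(S) \cdot H'$ and the slope data extracted from curve restrictions — i.e. handling the fact that $H'$ is a single divisor class rather than a family — is where global generation of $\mathcal{F}_2(E)$ is doing the real work.
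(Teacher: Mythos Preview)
Your plan has a genuine gap: the primary strategy of restricting $\mathcal{F}_2(E)$ to the curve $x+C$ cannot work, because that restriction is \emph{not} semistable. Indeed (see Theorem~5.1 of the paper), there is an injection $E\otimes\mathcal{O}_C(-x)\hookrightarrow \mathcal{F}_2(E)|_{x+C}$; since $\deg(\mathcal{F}_2(E)|_{x+C})=c_1(\mathcal{F}_2(E))\cdot H'=d-2$ and $\deg(E(-x))=d-2$, the subbundle $E(-x)$ has slope $(d-2)/2$, strictly larger than $\mu(\mathcal{F}_2(E)|_{x+C})=(d-2)/4$ once $d>2$. So your guessed extension ``$0\to E\to\mathcal{F}_2(E)|_{x+C}\to E(\mathrm{pt})\to 0$'' has the filtration going the wrong way, and the argument ``subsheaves of $E$ and $E(\mathrm{pt})$ have bounded slope, hence so does $S|_{x+C}$'' collapses: the curve already carries a destabilizing rank-$2$ subbundle, and there is nothing to prevent the restriction of a putative destabilizer from sitting inside it.

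What the paper does instead is exactly the alternative you mention in passing but do not develop: pull back by $\pi:C\times C\to S^2(C)$ and work on $C\times C$. The point is that $\pi^*\mathcal{F}_2(E)$ sits in an exact sequence
\[
0\longrightarrow p_1^*(E)\otimes\mathcal{O}_{C\times C}(-\Delta)\longrightarrow \pi^*\mathcal{F}_2(E)\longrightarrow p_2^*(E)\longrightarrow 0,
\]
where now the sub has $\mu_H$-slope $(d-4)/2$ and the quotient has $\mu_H$-slope $d/2$, i.e.\ the Harder--Narasimhan-type inequality goes the \emph{right} way. One first proves directly (by restricting to $\{x\}\times C$ and $C\times\{x\}$ and using stability of $E$) that each $p_i^*E$ is $\mu_H$-stable on $C\times C$. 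Then for any saturated subsheaf $F\subset\pi^*\mathcal{F}_2(E)$ of rank $1$, $2$, or $3$, one sets $F'=F\cap p_1^*E(-\Delta)$ and $F''=\mathrm{im}(F\to p_2^*E)$ and does a short case analysis on $(\mathrm{rk}\,F',\mathrm{rk}\,F'')$; stability of $p_i^*E$ bounds $\mu_H(F')$ and $\mu_H(F'')$ separately, and the evenness of $d$ turns the strict inequality $\mu_H(F'')<d/2$ into $\mu_H(F'')\le (d-2)/2$. Global generation is used only in one borderline rank-$2$ subcase, to rule out a splitting $\pi^*\mathcal{F}_2(E)\cong p_1^*E(-\Delta)\oplus p_2^*E$. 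Your remark that rank $3$ reduces to rank $1$ ``by passing to the quotient'' is also not quite right as stated: a rank-$1$ quotient is not the same as a rank-$1$ subsheaf, and the paper handles rank $3$ by the same $F',F''$ bookkeeping rather than by any duality.
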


\begin{rem}
 If $E$ is an even degree vector bundle which is a quotient of direct sum of very ample line bundles, i.e. if there is a surjection $\oplus L_i \to E$ where each $L_i$ is a 
 very ample line bundle on $C,$ then $E$ satisfies the property of Theorem $3.1$.
\end{rem}

We recall some well known results.

\begin{lem}
 Let $f : X \longrightarrow Y$ be a finite surjective morphism of non-singular surfaces, $F$ be a vector bundle on $Y$, and $H$ be an ample divisor on $Y$. Assume 
 $f^*(F)$ is $\mu_{f^*(H)}$-semistable 
 (repectively, $\mu_{f^*(H)}$-stable). Then $F$ is $\mu_H$-semistable (respectively, $\mu_H$-stable).
\end{lem}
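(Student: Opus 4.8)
The plan is to prove the contrapositive: a hypothetical destabilizing subsheaf of $F$ pulls back to a destabilizing subsheaf of $f^*(F)$. First I would record the two facts that make this work. Since $f$ is a finite surjective morphism of smooth --- hence Cohen--Macaulay --- surfaces of the same dimension, $f$ is flat (miracle flatness), so $f^*$ is exact and sends an injection of coherent sheaves to an injection. Second, writing $n = \deg f$, for any divisor classes $D, D'$ on $Y$ one has $f^*D \cdot f^*D' = n\,(D\cdot D')$ in $H^4(X,\mathbb Z)\cong\mathbb Z$; this follows from the projection formula $f_*(f^*D\cdot f^*D') = D\cdot f_*f^*D' = n\,(D\cdot D')$ together with the fact that $f_*$ preserves the degree of a $0$-cycle. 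Combined with $c_1(f^*G) = f^*c_1(G)$ for a coherent sheaf $G$ on $Y$, this yields $\deg_{f^*H}(f^*G) = n\,\deg_H(G)$, while ranks are preserved because $f$ is dominant; hence $\mu_{f^*H}(f^*G) = n\,\mu_H(G)$.

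Now suppose $F$ is not $\mu_H$-semistable and choose a coherent subsheaf $F'\subset F$ with $0 < \operatorname{rank}(F') < \operatorname{rank}(F)$ and $\mu_H(F') > \mu_H(F)$ (we may take $F'$ torsion free, as $F$ is). Applying $f^*$ gives an inclusion $f^*F'\hookrightarrow f^*F$; the sheaf $f^*F'$ is coherent, torsion free (being a subsheaf of the locally free sheaf $f^*F$), and of rank strictly between $0$ and $\operatorname{rank}(f^*F)$. By the slope identity above, $\mu_{f^*H}(f^*F') = n\,\mu_H(F') > n\,\mu_H(F) = \mu_{f^*H}(f^*F)$, so $f^*F'$ destabilizes $f^*F$ --- a contradiction. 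For the stable case the argument is the same with $>$ replaced by $\ge$: a subsheaf $F'$ with $0 < \operatorname{rank}(F') < \operatorname{rank}(F)$ and $\mu_H(F') \ge \mu_H(F)$ would pull back to a subsheaf of $f^*F$ with $\mu_{f^*H}(f^*F') \ge \mu_{f^*H}(f^*F)$, contradicting $\mu_{f^*H}$-stability.

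I do not expect a genuine obstacle; this is a standard descent statement. The only places where the hypotheses (as opposed to formal manipulation) are actually used are the flatness of $f$, which needs both surfaces smooth and of equal dimension, and the compatibility of intersection numbers with pullback, which needs $f$ proper (here, finite). One could instead run the argument through the maximal destabilizing subsheaf in the Harder--Narasimhan filtration of $F$, but that refinement is unnecessary: arbitrary torsion-free subsheaves already suffice for the definition, and only the slope inequality is used.
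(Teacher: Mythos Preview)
Your argument is correct and is the standard descent-by-pullback proof: flatness of $f$ (miracle flatness) preserves injections, and the projection formula gives $\mu_{f^*H}(f^*G) = (\deg f)\,\mu_H(G)$, so a destabilizing subsheaf of $F$ pulls back to one of $f^*F$. There is nothing to compare against, since the paper does not prove this lemma at all but simply cites \cite[Lemma~4.4]{LMN}; your write-up supplies exactly the argument that reference contains.
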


{\bf Proof}: \cite[Lemma\, 4.4] {LMN}.

\begin{lem}
 Let $C$ be a smooth irreducible curve of genus $g \geq 1$ and let $K_C$ be the canonical bundle of $C$. Let $J^{g - 1}(C)$ be the variety of line bundles of degree $g - 1$ of $C$, and let 
 $\Theta$ be the divisor on $J^{g - 1}(C)$ consisting of line bundles with non-zero sections. Let $\xi$ be a line bundle on $C$ of degree $g - 3$ and 
 $$
 \nu_{\xi} : C \times C \longrightarrow J^{g - 1}(C)
 $$
 be the morphism $(x, y) \mapsto \mathcal{O}_{C \times C}(x + y) \otimes \xi$. Then
 $$
 \nu_{\xi}^*(\Theta) \cong p_1^*(K_C \otimes \xi^*) \otimes p_2^*(K_C \otimes \xi^*) \otimes \mathcal{O}_{C \times C}(- \Delta)
 $$
 where $\Delta$ is the diagonal of $C \times C$ and $p_i : C\times C \longrightarrow C$ is the $i$-th coordinate projection.
\end{lem}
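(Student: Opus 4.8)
The plan is to pin down the line bundle $\nu_{\xi}^{*}\mathcal{O}(\Theta)$ by restricting it to the two rulings of $C\times C$ and applying the seesaw theorem; the one nontrivial input is a precise (not merely numerical) identification of the pullback of $\mathcal{O}(\Theta)$ under an Abel--Jacobi type map, which I will extract from the description of $\mathcal{O}(\Theta)$ as an inverse determinant of cohomology.

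First, recall the canonical description of $\mathcal{O}(\Theta)$. Fix a Poincar\'e line bundle $\mathcal{P}$ on $J^{g-1}(C)\times C$ and let $pr_{1}$ be the first projection. Since every line bundle of degree $g-1$ has Euler characteristic $0$, the line bundle $\det Rpr_{1*}\mathcal{P}$ does not depend on the choice of $\mathcal{P}$ (replacing $\mathcal{P}$ by $\mathcal{P}\otimes pr_{1}^{*}A$ multiplies it by $A^{\chi}=\mathcal{O}$), and writing $Rpr_{1*}\mathcal{P}$ locally as $[\mathcal{E}_{0}\xrightarrow{u}\mathcal{E}_{1}]$ with $\mathcal{E}_{i}$ vector bundles of the same rank, one sees that $\det u$ is a section of $(\det Rpr_{1*}\mathcal{P})^{-1}$ vanishing exactly on $\{L:h^{0}(L)>0\}=\Theta$. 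Hence $\mathcal{O}_{J^{g-1}(C)}(\Theta)\cong(\det Rpr_{1*}\mathcal{P})^{-1}$ canonically. Because the formation of the determinant of cohomology commutes with base change, it suffices to compute it after pulling back along each ruling of $C\times C$.

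So restrict $\nu_{\xi}$ to $\{x_{0}\}\times C$. Identifying $\{x_{0}\}\times C\cong C$ by the second projection, this is the morphism $\phi_{\eta}\colon C\to J^{g-1}(C)$, $y\mapsto\mathcal{O}_{C}(y)\otimes\eta$, with $\eta:=\mathcal{O}_{C}(x_{0})\otimes\xi$ of degree $g-2$. By seesaw, $(\phi_{\eta}\times\mathrm{id}_{C})^{*}\mathcal{P}\cong\mathcal{O}_{C\times C}(\Delta)\otimes p_{2}^{*}\eta\otimes p_{1}^{*}A$ for some $A$ (compare restrictions to the fibers of $p_{1}$), and the $p_{1}^{*}A$ is again invisible to $\det Rp_{1*}$ since $\chi=0$. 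Now apply $\det Rp_{1*}$ to
\[
0\to p_{2}^{*}\eta\to\mathcal{O}_{C\times C}(\Delta)\otimes p_{2}^{*}\eta\to(K_{C}^{-1}\otimes\eta)|_{\Delta}\to 0,
\]
using $\mathcal{O}_{C\times C}(\Delta)|_{\Delta}\cong K_{C}^{-1}$: the left term contributes the trivial bundle (its cohomology sheaves are $H^{i}(C,\eta)\otimes\mathcal{O}_{C}$), and since $p_{1}$ maps $\Delta$ isomorphically onto $C$ the right term contributes $K_{C}^{-1}\otimes\eta$. By multiplicativity of the determinant of cohomology, $\phi_{\eta}^{*}(\det Rpr_{1*}\mathcal{P})\cong K_{C}^{-1}\otimes\eta$, i.e.
\[
\nu_{\xi}^{*}\mathcal{O}(\Theta)\big|_{\{x_{0}\}\times C}\cong\phi_{\eta}^{*}\mathcal{O}(\Theta)\cong K_{C}\otimes\eta^{*}=K_{C}\otimes\xi^{*}\otimes\mathcal{O}_{C}(-x_{0}),
\]
a line bundle of degree $g$, consistent with $\Theta\cdot(\text{Abel--Jacobi curve})=g$ (which also fixes the sign).

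Finally, restrict the claimed right-hand side to $\{x_{0}\}\times C$: the factor $p_{1}^{*}(K_{C}\otimes\xi^{*})$ becomes trivial, $p_{2}^{*}(K_{C}\otimes\xi^{*})$ becomes $K_{C}\otimes\xi^{*}$, and $\mathcal{O}_{C\times C}(-\Delta)$ becomes $\mathcal{O}_{C}(-x_{0})$, so the two sides agree on every fiber of $p_{1}$. The whole configuration $\nu_{\xi}$, $\Delta$, $p_{1}^{*}(\cdot)\otimes p_{2}^{*}(\cdot)$ is invariant under $(x,y)\mapsto(y,x)$, so the two sides agree on a fiber of $p_{2}$ as well; by the seesaw theorem they are isomorphic, which is the lemma. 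The main obstacle is precisely what paragraphs two and three address: comparing the divisor \emph{classes} of the two sides is an easy Riemann--Roch computation on $C$, but upgrading this to an actual isomorphism for an arbitrary $\xi$ forces one to use the intrinsic determinant-of-cohomology description of $\mathcal{O}(\Theta)$ and to check that the ever-present $p_{1}^{*}$-twists are harmless because $\chi$ vanishes in degree $g-1$; once that is in place the only remaining care is with the sign/normalization, which is pinned down by effectivity of $\Theta$.
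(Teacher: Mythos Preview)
The paper does not actually prove this lemma: it simply cites \cite[Lemma~4.5]{LMN}. So there is no in-paper argument to compare against, and your write-up supplies a genuine, self-contained proof where the paper offers only a reference.

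Your argument is correct. The strategy---identify $\mathcal{O}(\Theta)$ intrinsically as the inverse determinant of cohomology of a Poincar\'e bundle, then compute its restriction to each ruling $\{x_{0}\}\times C$ via the short exact sequence for $\mathcal{O}(\Delta)$ and invoke seesaw---is the standard and clean way to get an honest isomorphism (not just a numerical equivalence). The two places where care is needed are handled properly: the ambiguity $p_{1}^{*}A$ in the Poincar\'e bundle disappears because $\chi=0$ in degree $g-1$, and the symmetry of both $\nu_{\xi}$ and the proposed right-hand side under the swap $(x,y)\mapsto(y,x)$ lets you transport the fiberwise computation from the $p_{1}$-rulings to a $p_{2}$-ruling, which is exactly what seesaw requires. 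The degree check ($\deg(K_{C}\otimes\xi^{*}\otimes\mathcal{O}_{C}(-x_{0}))=(2g-2)-(g-3)-1=g=\Theta\cdot[\text{Abel--Jacobi curve}]$) is a good sanity check and confirms the sign convention. One could nitpick that the appeal to ``effectivity of $\Theta$'' for the sign is slightly informal---the real point is that $\det u$ is a section of $(\det Rpr_{1*}\mathcal{P})^{-1}$, not of $\det Rpr_{1*}\mathcal{P}$---but you already said this explicitly in the second paragraph, so the argument is complete.
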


{\bf Proof}: \cite[Lemma\, 4.5]{LMN}.

\bigskip

Using Lemma $3.5$, we see that, to prove the semistablity of $\mathcal{F}_2(E)$ on $S^2(C)$ with respect to the ample class $x + C$, 
it is sufficient to prove the semistability of 
$\pi^*(\mathcal{F}_2(E))$ on $C \times C$ with respect to the ample divisor $H:= \pi^*(H') = [x \times C + C \times x]$.
By Lemma $3.6$, we have $\pi^*(\theta) = 
(g + 1)[x \times C + C \times x] - \Delta$. Since $c_1(\mathcal{F}_2(E)) = (d - 2(g +1))x + 2\theta$, we have  
$$
c_1(\pi^*(\mathcal{F}_2(E))) = d[x \times C + C \times x] -  2\Delta,
$$
and
$$
\mu_{H}(\pi^*(\mathcal{F}_2(E))) = \frac{d - 2}{2}.
$$

\bigskip

First note that the bundle $\pi^*(\mathcal{F}_2(E))$ fits in the following exact sequence on $C \times C$:
\begin{equation}
 0 \rightarrow \pi^*(\mathcal{F}_2(E)) \rightarrow p_1^*(E) \oplus p_2^*(E) \xrightarrow{q} E = p_1^*(E)|_{\Delta} = p_2^*(E)|_{\Delta} \rightarrow 0
\end{equation}
where the map $q$ is given by $q : (u, v) \mapsto u|_{\Delta} - v|_{\Delta}$. Let $\phi_i : \pi^*(\mathcal{F}_2(E)) \rightarrow p_i^*(E)$ be the restriction of the 
projection $p_1^*(E) \oplus p_2^*(E) \longrightarrow p_i^*(E)$ to $ \pi^*(\mathcal{F}_2(E)) \subset p_1^*(E) \oplus p_2^*(E)$. Then from the exact sequence (1), we get the following two 
exact sequences:
\begin{equation}
 0 \rightarrow p_1^*(E) \otimes \mathcal{O}_{C \times C}(- \Delta) \rightarrow \pi^*(\mathcal{F}_2(E)) \xrightarrow{\phi_1} p_2^*(E) \rightarrow 0,
\end{equation}
and
\begin{equation}
 0 \rightarrow p_2^*(E) \otimes \mathcal{O}_{C \times C}(- \Delta) \rightarrow \pi^*(\mathcal{F}_2(E)) \xrightarrow{\phi_2} p_1^*(E) \rightarrow 0
\end{equation}
\cite[Section\, 3]{BN}.

\begin{lem}
  $p_i^*(E)$ is $\mu_{H}$-stable, $\forall i = 1, 2$.
\end{lem}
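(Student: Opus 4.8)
The statement is that $p_i^*(E)$ is $\mu_H$-stable on $C\times C$, where $H=[x\times C+C\times x]$ and $E$ is a stable rank-two bundle on $C$. By symmetry it suffices to treat $i=2$. The plan is to reduce $\mu_H$-stability of the pullback to the stability of $E$ itself on a fibre of $p_1$. First I would compute the relevant slope: since $p_2^*(E)$ has trivial first Chern class along the $p_2$-direction and $c_1(p_2^*E)=(\deg E)\,[C\times x]$, we get $\mu_H(p_2^*E)=\tfrac{d}{2}\cdot\big([C\times x]\cdot H\big)/2$, which after using $[C\times x]\cdot H=1$ gives a concrete number; the key point is that this number is $d/2=\mu(E)$ in the appropriate normalization.

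The main step is to rule out destabilizing subsheaves. Suppose $\mathcal{G}\subset p_2^*(E)$ is a saturated (hence reflexive, and on a surface locally free outside a finite set) subsheaf of rank $1$ with $\mu_H(\mathcal{G})\ge\mu_H(p_2^*E)$. The idea is to restrict to a general fibre $F_t=\{t\}\times C\cong C$ of the first projection $p_1$. On such a fibre $p_2^*E|_{F_t}\cong E$, and for $t$ general $\mathcal{G}|_{F_t}$ is a rank-$1$ subsheaf of $E$; by the numerical relation between $H$-degree on $C\times C$ and degree on the fibre $C$ (the restriction of $H$ to $F_t$ is a single point, i.e. $\mathcal{O}_C(1)$ suitably), the inequality $\mu_H(\mathcal{G})\ge\mu_H(p_2^*E)$ forces $\deg(\mathcal{G}|_{F_t})\ge\mu(E)=d/2$, contradicting stability of $E$. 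Hence no destabilizing subsheaf exists and $p_2^*(E)$ is $\mu_H$-stable.

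An alternative, cleaner route avoids fibre restriction by using the Mehta–Ramanan/Mehta–Ramanathan type principle, or more elementarily the fact that for a surjective morphism with connected fibres the pullback of a semistable (resp. stable) sheaf with respect to a polarization that is relatively ample along the fibres remains semistable (resp. stable); here one uses that $H$ restricted to each $p_1$-fibre is ample on $C$, and that $p_2^*E$ is pulled back along $p_2$ but the relevant direction is the $p_1$-fibre. Concretely: a saturated rank-one subsheaf $\mathcal{G}$ has $c_1(\mathcal{G})=a[C\times x]+b[x\times C]$ with $a,b\in\mathbb{Z}$; intersecting the inclusion $\mathcal{G}\hookrightarrow p_2^*E$ restricted to a general $p_1$-fibre pins down $a$, and $\mu_H$-stability of $E$ on that fibre forces the desired strict inequality $\mu_H(\mathcal{G})<\mu_H(p_2^*E)$.

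The main obstacle I anticipate is purely bookkeeping: one must be careful that a destabilizing \emph{subsheaf} (not just subbundle) of the pullback restricts well to a general fibre — i.e. that saturation and torsion-freeness are preserved under restriction to a general member of the pencil of fibres — and that the numerical translation between $\mu_H$ on $C\times C$ and ordinary slope on $C$ is done correctly (in particular tracking the contribution of the $b[x\times C]$ component of $c_1(\mathcal{G})$, which can only help). None of this is deep; the essential input is simply the stability of $E$ on $C$ transported to the general fibre of $p_1$.
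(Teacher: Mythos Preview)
Your main step has a genuine gap. You restrict only to fibres of $p_1$, i.e.\ to curves $F_t=\{t\}\times C$, and assert that $\mu_H(\mathcal{G})\ge\mu_H(p_2^*E)$ forces $\deg(\mathcal{G}|_{F_t})\ge d/2$. This does not follow: the $H$-degree of $\mathcal{G}$ decomposes as
\[
c_1(\mathcal{G})\cdot H \;=\; c_1(\mathcal{G})\cdot[x\times C]\;+\;c_1(\mathcal{G})\cdot[C\times x],
\]
and restriction to a $p_1$-fibre computes only the first summand. Stability of $E$ on that fibre gives $c_1(\mathcal{G})\cdot[x\times C]<d/2$, but says nothing about the second term. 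Your parenthetical that the $b[x\times C]$ component ``can only help'' is exactly the missing argument: you give no reason why $b=c_1(\mathcal{G})\cdot[C\times x]\le 0$, and without that bound a large positive $b$ could make $\mu_H(\mathcal{G})$ as big as you like while $\deg(\mathcal{G}|_{F_t})$ stays small.

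The paper closes this gap by restricting to a fibre of the \emph{other} projection as well. On $C\times\{t\}$ the bundle $p_2^*E$ becomes $E_t\otimes\mathcal{O}_C\cong\mathcal{O}_C^{\oplus 2}$, and a line subsheaf of a trivial rank-two bundle has non-positive degree, so $c_1(\mathcal{G})\cdot[C\times x]\le 0$. Combining the two restrictions yields $\mu_H(\mathcal{G})<d/2+0=\mu_H(p_2^*E)$. Your plan is on the right track --- it is essentially the paper's argument --- but it needs both families of fibres: one to invoke stability of $E$, the other to exploit triviality of $p_2^*E$ along $p_2$-fibres. The Mehta--Ramanathan-style alternative you mention does not apply directly here either, since $p_2^*E$ is pulled back along $p_2$ while you are restricting along $p_1$-fibres; the two directions are genuinely different and both must be used.
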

\begin{proof}
Due to symmetry, we will do it only for ${p_2}^*E$. Since over a smooth irreducible projective surface double dual of a torsion free sheaf is free, by taking double dual if necessary, we see 
that to prove stability or semistability it is enough to consider subsheafs which are line bundles.
Let $L$ be a line bundle on $C \times C$ which is a subsheaf of ${p_2}^*E$ such that the quotient, $M$ say,  is torsion free. We have an 
exact sequence
$$
0 \longrightarrow L \longrightarrow {p_2}^*E \longrightarrow M \longrightarrow 0.
$$
We restrict this exact sequence to $x \times C$ and $C \times x$, respectively, to obtain the following exact sequences
$$
0 \longrightarrow L|_{x \times C} \longrightarrow E \longrightarrow M|_{x \times C} \longrightarrow 0,
$$
and
$$
0 \longrightarrow L|_{C \times x} \longrightarrow \mathcal{O}_C \oplus \mathcal{O}_C \longrightarrow M|_{C \times x} \longrightarrow 0.
$$
From the first exact sequence we get, deg$(L|_{x\times C}) = c_1(L).[x \times C] < \mu(E) = \frac{d}{2}$, since $E$ is stable. And from the second exact sequence we get deg$(L|_{C \times x}) = 
c_1(L).[C \times x] \leq 0$. Thus deg$(L) =  c_1(L).[x \times C + C \times x] < \frac{d}{2}= \mu_H(p_2^*E)$, proving the Lemma.
\end{proof}
\bigskip

{\bf Proof of Theorem \ref{T}}:\\

Let $L$ be a line bundle which is a subsheaf of $\pi^*(\mathcal{F}_2(E)) $ such that the quotient is torsion free.
Suppose there is a non-zero homomorphism from $L$ to $p_1^*(E)(- \Delta) := 
p_1^*(E) \otimes \mathcal{O}_{C \times C}(- \Delta)$. Then 
$\mu_{H}(L) < \mu_{H}(p_1^*(E)(- \Delta)) = \frac{d - 4}{2} < \frac{d - 2}{2}$. 
So assume that there is no non-zero map from $L$ to $p_1^*(E)(- \Delta)$. Thus there is an injection $L \rightarrow p_2^*(E)$ 
so that $\mu_{H}(L) < \mu_{H}(p_2^*(E)) = 
\frac{d}{2}$. Since $d$ is even, $\mu_{H}(L) \leq \frac{d}{2} - 1 = \frac{d - 2}{2}$.

\bigskip

Now let $F$ be a rank two coherent subsheaf of $\pi^*(\mathcal{F}_2(E))$ such that quotient is torsion-free. 
Then we have the following commutative diagram:

\begin{center}

 \begin{tikzpicture}[node distance=2.5cm, auto]
  \node(U1){0};
  \node(U2) [right of = U1]{$p_1^*(E)(- \Delta)$};
  \node(U3) [right of = U2]{$\pi^*(\mathcal{F}_2(E))$};
  \node(U4) [right of = U3]{$p_2^*(E)$};
  \node(U5) [right of = U4]{$0$};
  \node(L1) [below=.6cm of  U1]{$0$};
  \node(L2) [below=.5cm of U2]{$F'$};
  \node(L3) [below=.5cm of  U3]{$F$};
  \node(L4) [below=.5cm of  U4]{$F''$};
  \node(L5) [below=.6cm of  U5]{$0$};
  \draw [->] (L2) to node {} (U2);
  \draw [->] (L3) to node {} (U3);
  \draw [->] (L4) to node {} (U4);
  \draw [->] (U1) to node {} (U2);
  \draw [->] (U2) to node {} (U3);
  \draw [->] (U3) to node {} (U4);
  \draw [->] (U4) to node {} (U5);
  \draw [->] (L1) to node {} (L2);
  \draw [->] (L2) to node {} (L3);
  \draw [->] (L3) to node {} (L4);
  \draw [->] (L4) to node {} (L5);
 \end{tikzpicture}

\end{center}
where the vertical arrows are injections. Suppose that both $F'$ and $F''$ are non-zero. These two are rank $1$ coherent sheaf.
So we have, deg$(F') = \mu_{H}(F') <  \mu_{H}(p_1^*(E)(- \Delta)) 
= \frac{d - 4}{2}$ and deg$(F'') = \mu_{H}(F'') < \mu_{H}(p_2^*(E)) = \frac{d}{2}$.
Thus $\mu_{H}(F) = \frac{1}{2}($deg$(F') +$ deg$(F'')) < \frac{d - 2}{2}$. 
Now assume at least one of $F'$ and $F''$ is zero. First let $F''$ be zero. Then we have an injection $F \rightarrow p_1^*(E)(- \Delta)$ and the 
cokernel is a torsion sheaf. If the cokernel is supported at only finitely many points, then 
$\mu_{H}(F) = \mu_{H}(p_1^*(E)(- \Delta)) < \frac{d - 2}{2}$. If the 
cokernel is supported at a co-dimension 1 subscheme, then $\mu_{H}(F) < \mu_{H}(p_1^*(E)(- \Delta)) < \frac{d - 2}{2}$.
Now let $F'$ is zero. So we have an 
injection $F \rightarrow p_2^*(E)$ and the cokernel is a torsion sheaf. If the cokernel 
is supported at a subscheme of co-dimension 1, then $\mu_{H}(F) < \mu_{H}(p_2^*(E)) = \frac{d}{2}$ so that 
$\mu_{H}(F) \leq \frac{d - 1}{2}$. If $\mu_{H}(F) = \frac{d-1}{2}$, then the cokernel is supported on a divisor
of degree one. Now  an effective  divisor of degree one  on $C \times C$ is of the form $x \times C$ or $C \times x$, for some
$x \in C$. Thus $c_1(F)$ is of the form $c_1(p_2^*(E)) + [-x \times C]$ or $c_1(p_2^*(E)) + [-C \times x]$. 
But $c_1(\pi^*(\mathcal{F}_2(E)) = d[C \times x + x \times C] - 2 \Delta$, therefore 
$c_1((\pi^*(\mathcal{F}_2(E)/F)) = (d+1)[ x \times C] -2\Delta \text{ or } d[x \times C] + [C \times x] - 2\Delta$. In both the 
cases the torsion free sheaf $\pi^*(\mathcal{F}_2(E)/F$ restricted to any curve of the form $x \times C$ has negative degree.
This gives a contradiction to the fact that $\pi^*(\mathcal{F}_2(E)$ is generated by sections. Thus we have,
$\mu_{H}(F) \le \frac{d-2}{2}$.

If the cokernel is 
supported only at finitely many points then $\mu_{H}(F) = \mu_{H}(p_2^*(E)) = \frac{d}{2}$. In this case, $F$ is a rank two stable sheaf and hence it is isomorphic to 
$p_2^*(E)$. So the exact sequence $(2)$ splits, i.e., $\pi^*(\mathcal{F}_2(E)) \cong p_1^*(E)(- \Delta) \oplus p_2^*(E)$. Since $p_1^*(E)|_{x \times C}$ is trivial,
deg$(p_1^*(E)(- \Delta)|_{x \times C}) < 0$. This contradicts the fact 
that $\mathcal{F}_2(E)$ and hence $\pi^*(\mathcal{F}_2(E))$ is globally generated.

\bigskip

Let $F$ be a rank $3$ coherent subsheaf of $\pi^*(\mathcal{F}_2(E))$ such that the quotient is torsion free. Then we have the following commutative diagram:

\begin{center}

\begin{tikzpicture}[node distance=2.5cm, auto]
  \node(U1){0};
  \node(U2) [right of = U1]{$p_1^*(E)(- \Delta)$};
  \node(U3) [right of = U2]{$\pi^*(\mathcal{F}_2(E))$};
  \node(U4) [right of = U3]{$p_2^*(E)$};
  \node(U5) [right of = U4]{0};
  \node(L1) [below=.6cm of  U1]{0};
  \node(L2) [below=.5cm of U2]{$F'$};
  \node(L3) [below=.5cm of  U3]{F};
  \node(L4) [below=.5cm of  U4]{F''};
  \node(L5) [below=.6cm of  U5]{0};
  \draw [->] (L2) to node {} (U2);
  \draw [->] (L3) to node {} (U3);
  \draw [->] (L4) to node {} (U4);
  \draw [->] (U1) to node {} (U2);
  \draw [->] (U2) to node {} (U3);
  \draw [->] (U3) to node {} (U4);
  \draw [->] (U4) to node {} (U5);
  \draw [->] (L1) to node {} (L2);
  \draw [->] (L2) to node {} (L3);
  \draw [->] (L3) to node {} (L4);
  \draw [->] (L4) to node {} (L5);
 \end{tikzpicture}

\end{center}

 where the vertical arrows are injections. We have two possibilities: (I) rank$F' = 2$ and rank$F'' = 1$; (II) rank$F' = 1$ and rank$F'' = 2$. 
Suppose that rank$F' = 2$ and rank$F'' = 1$. By the arguments above, we have, $\mu_{H}(F') \leq \frac{d - 4}{2}$ and 
$\mu_{H}(F'') < \frac{d}{2}$. So  
$$
\mu_{H}(F) < \frac{3d - 8}{6} < \frac{d - 2}{2}.
$$
Now assume that rank$F' = 1$ and rank$F'' = 2$. In this case, we have, $\mu_{H}(F') < \frac{d - 4}{2}$ and
$\mu_{H}(F'') \leq \frac{d}{2}$. If $d$ is even, 
$\mu_{H}(F') \leq \frac{d - 4}{2} - 1$, hence $\mu_{H}(F) \leq \frac{3d - 6}{6}= \frac{d - 2}{2}$.

\section{semistability of $\pi^*(\mathcal{F}_2(E))$ for degree E odd}\label{s3}

In this section we will prove that the semi-stability of $\pi^*(\mathcal{F}_2(E))$ when degree $E$ is odd. First let's recall some 
definitions. 

\begin{Def} Let $E$ be a non-zero vector bundle on $C$ and $k \in \mathbb{Z}$, we denote by $\mu_k(E)$ the 
rational number

$$\mu_k(E) := \frac{\text{degree}(E) + k}{\text{rank}(E)}.$$ 
We say that the vector bundle $E$ is $(k, l)$-stable (resp. $(k, l)$-semistable) if, for every proper subbundle $F$ 
 of $E$ we have
 \[
  \mu_k(F) < \mu_{-l}(E/F) (resp. \mu_k(F) \le \mu_{-l}(E/F)).
 \]
\end{Def}

 Note that usual Mumford stability is equivalent to $(0, 0)$-stability.
If $g \ge 3$, then there always exists a $(0, 1)$-stable bundle and if $g \ge 4$, then the set of $(0, 1)$-stable
bundles form a dense open subset of the moduli space of stable bundels over $C$ of rank $2$ and degree $d$. \cite[Section 5]{RN}

\begin{thm}
 Assume the genus of $C$ greater than 2. Let $E$ be a rank two $(0, 1)$-stable bundle of odd
 degree $d \geq 1$ on $C$ such that $\mathcal{F}_2(E)$ is globally generated.
  Then the bundle $\mathcal{F}_2(E)$ on $S^2(C)$ is $\mu_{H'}$-semistable with respect to the ample class $H'=x+C$.
\end{thm}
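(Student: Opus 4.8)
The strategy is to mimic the proof of Theorem~\ref{T}, feeding in $(0,1)$-stability exactly where the parity of $d$ defeats the even-degree estimates. By Lemma $3.5$ it suffices to prove that $\pi^*(\mathcal{F}_2(E))$ is $\mu_H$-semistable on $C\times C$ with $H=[x\times C+C\times x]$; the Chern-class computation preceding the proof of Theorem~\ref{T} (which does not use the parity of $d$) gives $\mu_H(\pi^*(\mathcal{F}_2(E)))=\frac{d-2}{2}$, which for $d$ odd is a half-integer. The point of friction is that a line subsheaf $L\hookrightarrow p_2^*(E)$ only yields $\mu_H(L)<\frac d2$, i.e.\ $\mu_H(L)\le\frac{d-1}{2}>\frac{d-2}{2}$. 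So the first step is to sharpen Lemma $3.7$ by the restriction argument used in its proof: for a line subsheaf $L\subset p_2^*(E)$ we have $p_2^*(E)|_{x\times C}\cong E$ and $p_2^*(E)|_{C\times x}\cong\mathcal{O}_C^{2}$, and since $E$ is $(0,1)$-stable of odd degree $d$ the saturation of $L|_{x\times C}$ in $E$ has degree $<\frac{d-1}{2}$, hence $\le\frac{d-3}{2}$; as $\deg(L|_{C\times x})\le 0$ this gives $\deg_H(L)=\deg(L|_{x\times C})+\deg(L|_{C\times x})\le\frac{d-3}{2}$. Likewise $p_1^*(E)(-\Delta)|_{x\times C}\cong\mathcal{O}_C(-x)^{2}$ and $p_1^*(E)(-\Delta)|_{C\times x}\cong E(-x)$; since twisting preserves $(0,1)$-stability, $E(-x)$ is $(0,1)$-stable of odd degree $d-2$, so for a line subsheaf $L'\subset p_1^*(E)(-\Delta)$ one gets $\deg(L'|_{x\times C})\le -1$ (as $\mathcal{O}_C(-x)^{2}$ has no line subsheaf of degree $>-1$) and the saturation of $L'|_{C\times x}$ in $E(-x)$ has degree $<\frac{d-3}{2}$, hence $\le\frac{d-5}{2}$, whence $\deg_H(L')\le\frac{d-7}{2}$.

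With these sharpened bounds I would run the usual rank analysis on a saturated subsheaf $F\subset\pi^*(\mathcal{F}_2(E))$ (so the quotient is torsion-free), intersecting $F$ with $p_1^*(E)(-\Delta)$ in the exact sequence $(2)$ as in the proof of Theorem~\ref{T}. If $\operatorname{rk}F=1$: either $F\subset p_1^*(E)(-\Delta)$, so $\mu_H(F)\le\frac{d-7}{2}$, or the composite $F\to p_2^*(E)$ is nonzero hence injective, so $\mu_H(F)\le\frac{d-3}{2}$; in both cases $\mu_H(F)<\frac{d-2}{2}$. If $\operatorname{rk}F=2$, put $F'=F\cap p_1^*(E)(-\Delta)$ and let $F''$ be the image in $p_2^*(E)$, so $\operatorname{rk}F'+\operatorname{rk}F''=2$. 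If $\operatorname{rk}F'=2$ then $F\subset p_1^*(E)(-\Delta)$ and $\mu_H(F)\le\frac{d-4}{2}$; if $\operatorname{rk}F'=\operatorname{rk}F''=1$ then $\mu_H(F)=\tfrac12(\deg_HF'+\deg_HF'')\le\tfrac12\!\left(\tfrac{d-7}{2}+\tfrac{d-3}{2}\right)=\tfrac{d-5}{2}$; if $F'=0$ then $F\hookrightarrow p_2^*(E)$ with $\mu_H(F)\le\frac d2$, so only $\mu_H(F)\in\{\frac{d-1}{2},\frac d2\}$ needs excluding, and both are ruled out by global generation exactly as in the even case (if $\mu_H(F)=\frac{d-1}{2}$ the cokernel of $F\hookrightarrow p_2^*(E)$ is a single fibre and $\pi^*(\mathcal{F}_2(E))/F$ restricts to a negative-degree bundle on a general fibre; if $\mu_H(F)=\frac d2$ then $F$ is reflexive, hence locally free, hence $F=p_2^*(E)$, so $(2)$ splits and $p_1^*(E)(-\Delta)|_{x_0\times C}\cong\mathcal{O}_C(-x_0)^{2}$ is a globally generated bundle of negative degree). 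Hence $\mu_H(F)\le\frac{d-2}{2}$.

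Finally, for $\operatorname{rk}F=3$ the set-up leaves $(\operatorname{rk}F',\operatorname{rk}F'')\in\{(2,1),(1,2)\}$. In the case $(2,1)$, $\deg_HF'\le d-4$ and $\deg_HF''\le\frac{d-3}{2}$, so $\mu_H(F)\le\frac{3d-11}{6}<\frac{d-2}{2}$. The case $(1,2)$ is the tight one, and I expect it to be the main obstacle: here $\deg_HF''\le d$, and one closes the gap only by invoking the sharpened bound $\deg_HF'\le\frac{d-7}{2}$ (plain $\mu_H$-stability of the twisted bundle $p_1^*(E)(-\Delta)$ gives only $\deg_HF'\le\frac{d-5}{2}$, which is off by exactly one), yielding $\deg_HF\le\frac{d-7}{2}+d=\frac{3d-7}{2}$ and hence $\mu_H(F)\le\frac{3d-7}{6}<\frac{3d-6}{6}=\frac{d-2}{2}$. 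The margin here is only $\frac16$, so it is essential to restrict $F'$ to \emph{both} rulings through a point and to use that $E(-x)$ remains $(0,1)$-stable; the rest of the argument (the descent by Lemma $3.5$, the Chern-class bookkeeping, and the remaining rank cases) is routine and parallels the even-degree proof, with the two global-generation exclusions copied essentially verbatim.
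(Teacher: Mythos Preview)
Your proposal is correct and follows essentially the same route as the paper: pull back to $C\times C$, use the exact sequence $(2)$, and treat rank-$1$, $2$, $3$ subsheaves by intersecting with $p_1^*(E)(-\Delta)$, with the $(0,1)$-stability hypothesis entering precisely to sharpen the restriction bounds on the two rulings. In particular, your sharpened estimate $\deg_H(L')\le\frac{d-7}{2}$ for a line subsheaf $L'\subset p_1^*(E)(-\Delta)$---obtained by restricting to both $x\times C$ and $C\times x$ and using that $E(-x)$ stays $(0,1)$-stable---is exactly what the paper invokes in the rank-$3$ case $(\operatorname{rk}F',\operatorname{rk}F'')=(1,2)$, and your observation that this is the tight case (with margin $\tfrac16$) matches the paper's treatment; the remaining cases, including the two global-generation exclusions in rank $2$, are handled identically.
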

\begin{proof}
 Let $L$ be a line bundle which is a subsheaf of $\pi^*(\mathcal{F}_2(E)) $ such that the quotient is torsion free.
Suppose there is a non-zero homomorphism from $L$ to $p_1^*(E)(- \Delta) := 
p_1^*(E) \otimes \mathcal{O}_{C \times C}(- \Delta)$. Then 
$\mu_{H}(L) < \mu_{H}(p_1^*(E)(- \Delta)) = \frac{d - 4}{2} < \frac{d - 2}{2}$. 
So assume that there is no non-zero map from $L$ to $p_1^*(E)(- \Delta)$. Thus there is an injection $L \rightarrow p_2^*(E).$
 Now consider the exact sequence,
 \begin{equation}\label{eqA}
  0 \longrightarrow L \longrightarrow p_2^*(E) \longrightarrow M \longrightarrow 0,
 \end{equation}
where $M$ is a sheaf of rank $1$. 
Restricting the above exact sequence to $C \times x$, we see that, $c_1(L).[C \times x] \le 0$. On the other hand, restricting the 
above exact sequence to $C \times x$ and using that $E$ is (0.1)-stable, we get that $c_1(L).[C\times x] < \frac{d-1}{2}.$ Since 
$L$ is a line bundle, $c_1(L).[C\times x] \leq \frac{d-3}{2}.$ So we have $\mu_H(L) \leq \frac{d-3}{2}<\frac{d-2}{2}.$

\bigskip

Let's assume $F$ be a rank two coherent subsheaf of $\pi^*(\mathcal{F}_2(E))$ such that quotient is torsion-free. 
Then we have the following commutative diagram:

\begin{center}

 \begin{tikzpicture}[node distance=2.5cm, auto]
  \node(U1){0};
  \node(U2) [right of = U1]{$p_1^*(E)(- \Delta)$};
  \node(U3) [right of = U2]{$\pi^*(\mathcal{F}_2(E))$};
  \node(U4) [right of = U3]{$p_2^*(E)$};
  \node(U5) [right of = U4]{$0$};
  \node(L1) [below=.6cm of  U1]{$0$};
  \node(L2) [below=.5cm of U2]{$F'$};
  \node(L3) [below=.5cm of  U3]{$F$};
  \node(L4) [below=.5cm of  U4]{$F''$};
  \node(L5) [below=.6cm of  U5]{$0$};
  \draw [->] (L2) to node {} (U2);
  \draw [->] (L3) to node {} (U3);
  \draw [->] (L4) to node {} (U4);
  \draw [->] (U1) to node {} (U2);
  \draw [->] (U2) to node {} (U3);
  \draw [->] (U3) to node {} (U4);
  \draw [->] (U4) to node {} (U5);
  \draw [->] (L1) to node {} (L2);
  \draw [->] (L2) to node {} (L3);
  \draw [->] (L3) to node {} (L4);
  \draw [->] (L4) to node {} (L5);
 \end{tikzpicture}

\end{center}
where the vertical arrows are injections. We need to consider three different cases: (I) rank $F' = 1=$ rank $F''$; 
(II) $F''=0$; (III) $F'=0$. In each of these cases, we can argue exaclty as in the case of even degree to conclude that 
$\mu_H(F) \leq \frac{d-2}{2} =\mu_H(\pi^*\mathcal{F}_2(E)).$

\bigskip

Now assume $F$ is subsheaf of $\pi^*\mathcal{F}_2(E)$ rank $3.$  Then again we have the following commutative diagram:

\begin{center}

\begin{tikzpicture}[node distance=2.5cm, auto]
  \node(U1){0};
  \node(U2) [right of = U1]{$p_1^*(E)(- \Delta)$};
  \node(U3) [right of = U2]{$\pi^*(\mathcal{F}_2(E))$};
  \node(U4) [right of = U3]{$p_2^*(E)$};
  \node(U5) [right of = U4]{0};
  \node(L1) [below=.6cm of  U1]{0};
  \node(L2) [below=.5cm of U2]{$F'$};
  \node(L3) [below=.5cm of  U3]{F};
  \node(L4) [below=.5cm of  U4]{F''};
  \node(L5) [below=.6cm of  U5]{0};
  \draw [->] (L2) to node {} (U2);
  \draw [->] (L3) to node {} (U3);
  \draw [->] (L4) to node {} (U4);
  \draw [->] (U1) to node {} (U2);
  \draw [->] (U2) to node {} (U3);
  \draw [->] (U3) to node {} (U4);
  \draw [->] (U4) to node {} (U5);
  \draw [->] (L1) to node {} (L2);
  \draw [->] (L2) to node {} (L3);
  \draw [->] (L3) to node {} (L4);
  \draw [->] (L4) to node {} (L5);
 \end{tikzpicture}

\end{center}

where the vertical arrows are injections. We have two possibilities: (I) rank$F' = 2$ and rank$F'' = 1$; (II) rank$F' = 1$ 
and rank$F'' = 2$. Using the same argument as in Theorem $3.3$, we can show that in the case of (I), $\mu_H(F) < \frac{d-2}{2}.$ 
Now consider the case (II). In this case, restircting the exact sequence $0 \to F' \to p_1^*(E)(- \Delta)$ to 
$x \times C$ and $C \times x,$ we get that
$$
0 \to F'|_{x\times C} \to \mathcal{O}_C(-x)
$$
and
$$
0 \to F'|_{C \times x} \to E \otimes \mathcal{O}_C(-x).
$$
From these two exact sequences and using the fact that $E$ is $(0,1)$-stable we see that $\mu_H(F') < \frac{d-4}{2}$ and 
hence $\mu_H(F') \leq \frac{d-6}{2}.$ Also using the same argument as above, we have, in any case, $\mu_H(F'') \leq \frac{d}{2}.$
Combining all these, we get that $\mu_H(F) < \frac{d-2}{2}.$

\end{proof}

\section{Restriction to curves of the form $x+C$}

In this section we will investigate the restriction of $\mathcal{F}_2(E)$ to the curves of the form $x+C$ where $x+C$ is the 
reduced divisor of $S^2(C)$ whose support equals to $\{x+c : c \in C\}.$ For this we have the follwoing theorem.

\begin{thm}
 Let $C$ be a smooth irreducible projective curve over $\mathbb{C}$ of genus $g$ and let $E$ be a rank to vector bundle on $C$ of 
 degree $d \geq 3.$  Then for any $x \in C, \mathcal{F}_2(E)|_{x+C}$ is not semistable.
\end{thm}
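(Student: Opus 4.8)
The plan is to reduce the statement to a computation on $C$ itself and then exhibit an explicit destabilizing subsheaf. Fix $x\in C$. Since $\pi$ restricts to an isomorphism $\{x\}\times C\xrightarrow{\ \sim\ }x+C$, the bundle $\mathcal{F}_2(E)|_{x+C}$ is semistable if and only if its pullback $\pi^*(\mathcal{F}_2(E))|_{\{x\}\times C}$ is semistable as a rank four bundle on the curve $C\cong\{x\}\times C$. So it is enough to show that $G:=\pi^*(\mathcal{F}_2(E))|_{\{x\}\times C}$ is not semistable. The tool will be the exact sequence $(3)$, namely $0\to p_2^*(E)\otimes\Oh_{C\times C}(-\Delta)\to \pi^*(\mathcal{F}_2(E))\xrightarrow{\phi_2} p_1^*(E)\to 0$.

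First I would restrict $(3)$ to $\{x\}\times C$. Since $p_1^*(E)$ is locally free, the restricted sequence stays exact. Identifying $\{x\}\times C$ with $C$ via the second projection, one has $p_1^*(E)|_{\{x\}\times C}\cong E_x\otimes\Oh_C\cong \Oh_C^{\oplus 2}$, a trivial bundle, because $p_1$ is constant on $\{x\}\times C$; and $\Oh_{C\times C}(-\Delta)|_{\{x\}\times C}\cong \Oh_C(-x)$, because $\Delta\cap(\{x\}\times C)=\{(x,x)\}$ transversally, so that $\big(p_2^*(E)\otimes\Oh_{C\times C}(-\Delta)\big)|_{\{x\}\times C}\cong E\otimes\Oh_C(-x)=:E(-x)$. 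Thus one obtains a short exact sequence of bundles on $C$
\[
0\longrightarrow E(-x)\longrightarrow G\longrightarrow \Oh_C^{\oplus 2}\longrightarrow 0 .
\]

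It remains to compare slopes. From this sequence, $\operatorname{rank}G=4$ and $\deg G=\deg E(-x)=d-2$ (equivalently, using $c_1(\mathcal{F}_2(E))=(d-2(g+1))x+2\theta$ together with $x^2=1$ and $x\cdot\theta=g$, one gets $c_1(\mathcal{F}_2(E))\cdot x=d-2$); hence $\mu(G)=(d-2)/4$. On the other hand the subbundle $E(-x)\subset G$ has $\mu(E(-x))=(d-2)/2$. Since $d\geq 3$ we have $d-2\geq 1>0$, so $(d-2)/2>(d-2)/4=\mu(G)$, and therefore $E(-x)$ is a destabilizing subsheaf of $G$. Consequently $G$, and hence $\mathcal{F}_2(E)|_{x+C}$, is not semistable.

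I do not expect a serious obstacle here; the only points needing (routine) care are checking that the restriction of $(3)$ to $\{x\}\times C$ remains exact and correctly identifying the restricted bundles $E(-x)$ and $\Oh_C^{\oplus 2}$. As a consistency check one may instead restrict the other sequence $(2)$, obtaining $0\to \Oh_C(-x)^{\oplus 2}\to G\to E\to 0$, which again yields $\deg G=d-2$; but it is sequence $(3)$, not $(2)$, that produces the destabilizing subsheaf, which is why $E(-x)$ is the right object to single out.
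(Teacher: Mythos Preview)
Your proof is correct and produces the same destabilizing subbundle $E(-x)$ as the paper. The paper obtains the inclusion $E(-x)\hookrightarrow \mathcal{F}_2(E)|_{x+C}$ via base change and the description of $\pi^{-1}(x+C)$, and then must separately handle the possibility that the cokernel has torsion; by instead restricting the exact sequence~(3) to $\{x\}\times C$ you get a quotient that is visibly locally free ($\Oh_C^{\oplus 2}$), so that extra case analysis never arises. Apart from this streamlining the two arguments are essentially the same.
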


\begin{proof}
 First note that, since $E$ is locally free, $p_2^*E$ is flat over $S^2(C)$ and using the base change formula we get 
 $$\mathcal{F}_2(E)|_{x+C}=\pi_*(p_2^*E|_{\pi^{-1}(x+C)}).$$
 Also we have the following exact sequence
 $$0 \to p_2^*E|_{\pi^{-1}(x+C)} \to p_2^*E|_{x \times C} \oplus p_2^*E|_{C \times x} \to E|_{(x,x)} \to 0.$$
 From this exact sequence and using the fact that $\pi|_{x \times C} : x\times C \to x+C$ and $\pi|_{C \times x} : C \times x 
 \to x+C$ are isomorphisms and $p_2^*E|_{x \times C} = E$ and $p_2^*E|_{C \times x} = E_x \otimes \mathcal{O}_C$, we get an 
 injective map $$0 \to E \otimes \mathcal{O}_C(-x) \to \mathcal{F}_2(E)|_{x+C}.$$
 Now the degree of $E \otimes \mathcal{O}_C(-x) = d-2$ and that of $\mathcal{F}_2(E)|_{x+C} =d-2.$ So the cokernel is rank $2$ 
 coherent sheaf of degree zero. If it is torsion free then clearly $\mathcal{F}_2(E)|_{x+C}$ is not semistable. If the 
 cokernel has torsion, then there is an effective divisor $D$ such that the above map factors through 
 $E \otimes \mathcal{O}_C(-x) \otimes \mathcal{O}_C(D)$ and in this case the cokernel will be again torsion free. But in this case 
 the degree of the cokernel will be of negetive degree. So in this case $\mathcal{F}_2(E)|_{x+C}$ has a torsion free quotient of 
 negetive degree. Hence it is not semistable.
\end{proof}

$Acknowledgements:$
 We would like to thank Prof. D.S. Nagaraj for encouraging us to choose the problem and also letting us the helping hand 
 whenever necessary. The first named author would like to thank ISI Bangalore for their hospitality during the stay where 
 the work started.

\end{document}